\documentclass{amsart}
\usepackage{amsmath}
\usepackage{amssymb}
\usepackage{amsfonts}
\usepackage{float}
\usepackage{fancyhdr}

\setcounter{MaxMatrixCols}{10}

\newtheorem{theorem}{Theorem}[section]
\theoremstyle{plain}

\newtheorem{notation}{Notation}

\newtheorem{lemma}[theorem]{Lemma}

\newtheorem{proposition}[theorem]{Proposition}

\numberwithin{equation}{section}
\input{tcilatex}
\pagestyle{fancy}
\fancyhf{}
\chead{\thepage}

\begin{document}
\title[Products of point stabilizers]{Primitive permutation groups as
products of point stabilizers}
\author{Martino Garonzi}
\address[Martino Garonzi]{Departamento de Matematica, Universidade de Bras%
\'{\i}lia, Campus Universit\'{a}rio Darcy Ribeiro, Bras\'{\i}lia - DF
70910-900, Brasil}
\email{mgaronzi@gmail.com}
\thanks{MG acknowledges the support of the University of Brasilia}
\author{Dan Levy}
\address[Dan Levy]{The School of Computer Sciences, The Academic College of
Tel-Aviv-Yaffo, 2 Rabenu Yeruham St., Tel-Aviv 61083, Israel}
\email{danlevy@mta.ac.il}
\author{Attila Mar\'oti}
\address[Attila Mar\'oti]{MTA Alfr\'ed R\'enyi Institute of Mathematics,
Re\'altanoda utca 13-15, H-1053, Budapest, Hungary}
\email{maroti.attila@renyi.mta.hu}
\thanks{AM was supported by the MTA R\'{e}nyi Lend\"{u}let Groups and Graphs
Research Group and by OTKA K84233.}
\author{Iulian I. Simion}
\address[Iulian I. Simion]{Department of Mathematics, University of Padova,
Via Trieste 63, 35121 Padova, Italy}
\email{iulian.simion@math.unipd.it}
\thanks{IS acknowledges the support of the University of Padova (grants
CPDR131579/13 and CPDA125818/12). }
\keywords{primitive groups, products of conjugate subgroups}
\subjclass[2000]{\ 20B15, 20D40 }
\date{\today }

\begin{abstract}
We prove that there exists a universal constant $c$ such that any finite
primitive permutation group of degree $n$ with a non-trivial point
stabilizer is a product of no more than $c\log n$ point stabilizers.
\end{abstract}

\maketitle

\section{Introduction}

Given a finite group $G$ \footnote{%
All groups discussed are assumed to be finite.}\ and a subgroup $H$ of $G$
whose normal closure is $G$, one can show, by a straightforward elementary
argument, that $G$ is the setwise product of at least $\frac{\log \left\vert
G\right\vert }{\log \left\vert H\right\vert }$ conjugates of $H$. A far
reaching conjecture of Liebeck, Nikolov and Shalev states \cite{LNS} that in
the case that $G$ is a non-abelian simple group, $\frac{\log \left\vert
G\right\vert }{\log \left\vert H\right\vert }$ is in fact the right order of
magnitude for the minimal number of conjugates of $H$ whose product is $G$,
namely, there exists a universal constant $c$ such that for any non-abelian
simple group $G$ and any non-trivial $H\leq G$, the group $G$ is the product
of no more than $c\frac{\log \left\vert G\right\vert }{\log \left\vert
H\right\vert }$conjugates of $H$. Later on, in \cite{LNS2}, this conjecture
was extended to allow $H$ to be any subset of $G$ of size at least $2$. Some
weaker versions of these conjectures are proved in \cite[Theorem 2]{LNS}, 
\cite[Theorem 3]{LNS2}, and \cite[Theorem 1.3]{GPSSBoundedProof2013}.

Here we look for a universal upper bound on the minimal length of a product
covering of a finite primitive permutation group by conjugates of a point
stabilizer. We will prove the following logarithmic\footnote{%
Throughout the paper, $\log $ stands for logarithm in base $2$.} bound:

\smallskip

{}\textbf{Theorem 1. }\label{Th_GeneralPrimitive} \textit{There exists a \
universal constant }$c$\textit{\ such that if }$G$\textit{\ is any primitive
permutation group of degree }$n$\textit{\ with a non-trivial point
stabilizer }$H$\textit{\ then }$G$\textit{\ is a product of at most }$c\log
n $\textit{\ conjugates of }$H$\textit{.}

\smallskip

Note that in most relevant cases, $\frac{\log \left\vert G\right\vert }{\log
\left\vert H\right\vert }<$ $\log \left\vert G:H\right\vert =$ $\log n$ (see
Lemma \ref{Lem_LNSTranslateToIndex}). Thus we do not know whether the bound
provided by Theorem \ref{Th_GeneralPrimitive} is the best possible. In fact,
on the basis of currently published results we don't even know if this bound
can be improved for any particular O'Nan-Scott family of primitive groups.
We believe that these questions deserve further investigation.

\section{Preliminaries}

We collect some preparatory results and notation.

\begin{lemma}
\label{Lem_LNSTranslateToIndex}Let $G$ be a group and $H\leq G$ such that $%
\left\vert H\right\vert \geq 4$ and $\left\vert G:H\right\vert \geq 4$. Then 
$\log \left\vert G\right\vert /\log \left\vert H\right\vert \leq \log
\left\vert G:H\right\vert $.
\end{lemma}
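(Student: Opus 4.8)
The plan is to reduce the statement to an elementary inequality between the two logarithms. Write $|G|=|H|\cdot|G:H|$, so that $\log|G|=\log|H|+\log|G:H|$, and observe that the asserted inequality $\log|G|/\log|H|\le\log|G:H|$ is equivalent (after clearing the positive denominator $\log|H|$) to
\[
\log|H|+\log|G:H|\ \le\ \log|H|\cdot\log|G:H|.
\]

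Next I would introduce the abbreviations $a:=\log|H|$ and $b:=\log|G:H|$. The hypotheses $|H|\ge4$ and $|G:H|\ge4$ give $a\ge2$ and $b\ge2$ (this is the only place the numerical constant $4$ is used, and it is exactly what makes the argument work). The inequality to be proved is now $a+b\le ab$, which I would rewrite as $(a-1)(b-1)\ge1$; this reformulation is immediate since $(a-1)(b-1)=ab-a-b+1$.

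Finally, from $a\ge2$ and $b\ge2$ we get $a-1\ge1$ and $b-1\ge1$, hence $(a-1)(b-1)\ge1$, which is precisely what was needed. Unwinding the equivalences yields $\log|G|/\log|H|\le\log|G:H|$, completing the proof.

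There is no real obstacle here: the statement is a short exercise in manipulating logarithms, and the only subtlety is noticing that the bounds $|H|,|G:H|\ge4$ translate into $a,b\ge2$, which is exactly the threshold that forces $(a-1)(b-1)\ge1$. (One could note for context that the constant $4$ is sharp in the sense that if either index drops to $2$ or $3$ the inequality can fail, e.g.\ when $a$ or $b$ lies strictly between $1$ and $2$, but this is not needed for the statement.)
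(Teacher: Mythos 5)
Your proof is correct and takes essentially the same route as the paper: both clear the denominator $\log|H|$ and reduce to an elementary inequality in the two logarithms using $|H|,|G:H|\ge 4$ to get both logs at least $2$ (the paper writes it as $x\ge y+1+\frac{1}{y-1}$ with $x=\log|G|$, $y=\log|H|$, while you phrase the same reduction symmetrically as $(a-1)(b-1)\ge 1$). No gaps; the argument is complete.
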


\begin{proof}
Set $x:=\log \left\vert G\right\vert $ and $y:=\log \left\vert H\right\vert $%
. Then the desired inequality reads $x/y\leq x-y$, which is equivalent to $%
x\geq y+1+\frac{1}{y-1}$. Since $y\geq 2$ because $\left\vert H\right\vert
\geq 4$, this is clearly satisfied if $x\geq y+2$, which is equivalent to $%
\left\vert G:H\right\vert \geq 4$.
\end{proof}

\begin{lemma}
\label{Lem_|Talpha|>=4}Let $G$ be an almost simple group with socle $T$. Let 
$M$ be a maximal subgroup of $G$ and let $M_{0}:=T\cap M$. Then $\left\vert
M_{0}\right\vert \geq 6$.
\end{lemma}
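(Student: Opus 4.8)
The plan is to split according to whether or not the socle $T$ is contained in $M$. If $T\leq M$ then $M_{0}=T\cap M=T$, and since $T$ is a non-abelian simple group we have $|M_{0}|=|T|\geq 60>6$, so there is nothing to prove. Hence assume $T\not\leq M$. Then $M<TM\leq G$, so maximality of $M$ gives $TM=G$; consequently $M_{0}=T\cap M\neq T$, $|G:M|=|T:M_{0}|$, and $M/M_{0}\cong TM/T=G/T$ embeds into $\operatorname{Out}(T)$. One should also note at the outset that $M_{0}\neq 1$: otherwise $T$ would be a regular normal subgroup of $G$ in its primitive action on $G/M$, which is impossible for an almost simple group (equivalently, $M\cong G/T$ is solvable by Schreier's conjecture, and using the structure of $\operatorname{Out}(T)$ together with Thompson's theorem that a finite group admitting a fixed-point-free automorphism of prime order is nilpotent, one checks that $M$ normalises a proper nontrivial subgroup of $T$, contradicting the maximality of $M$). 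So from now on $1\neq M_{0}\neq T$, and I would argue by contradiction, assuming $|M_{0}|\leq 5$.

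Each of the orders $2,3,4,5$ belongs to an abelian group and is a power of a single prime $p\in\{2,3,5\}$; thus $M_{0}$ is an abelian $p$-group. Since $T\trianglelefteq G$, the subgroup $M$ normalises $M_{0}=T\cap M$. Moreover $M_{0}$ is not normal in $G$: a nontrivial normal subgroup of $G$ contains the unique minimal normal subgroup $T$, so $M_{0}\trianglelefteq G$ would force $T\leq M_{0}\leq T$, i.e. $M_{0}=T$, against $M_{0}\neq T$. Therefore $M\leq N_{G}(M_{0})<G$, and maximality of $M$ yields $M=N_{G}(M_{0})$, whence $M_{0}=N_{G}(M_{0})\cap T=N_{T}(M_{0})$. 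Hence $M_{0}$ is a self-normalising $p$-subgroup of $T$.

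The remaining step is to contradict this using Burnside's normal $p$-complement theorem. First, $M_{0}$ must be a Sylow $p$-subgroup of $T$: otherwise it is properly contained in some Sylow $p$-subgroup $S$, and since proper subgroups of a $p$-group are properly contained in their normalisers we would get $M_{0}<N_{S}(M_{0})\leq N_{T}(M_{0})=M_{0}$, a contradiction. Next, as $M_{0}$ is abelian we have $M_{0}\leq C_{T}(M_{0})\leq N_{T}(M_{0})=M_{0}$, so $N_{T}(M_{0})=C_{T}(M_{0})$. By Burnside's theorem $T$ then has a normal $p$-complement, that is, a normal subgroup $K$ with $T=KM_{0}$ and $K\cap M_{0}=1$; since $1\neq M_{0}\neq T$ we get $1\neq K\neq T$, contradicting the simplicity of $T$. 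Therefore $|M_{0}|\geq 6$. The only point demanding genuine care is the preliminary reduction $M_{0}\neq 1$, i.e. excluding a regular socle; once past that, the argument is the routine chain ``self-normalising abelian subgroup of a simple group $\Rightarrow$ Sylow and self-centralising $\Rightarrow$ Burnside'', and the rest is bookkeeping.
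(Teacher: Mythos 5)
Your proof is correct and follows essentially the same route as the paper: reduce to $1\neq M_{0}\neq T$, show $M=N_{G}(M_{0})$ and hence $M_{0}=N_{T}(M_{0})$, deduce that a hypothetical $M_{0}$ of order at most $5$ is an abelian self-normalising Sylow $p$-subgroup, and contradict simplicity of $T$ via Burnside's normal $p$-complement theorem. The only difference is at the step $M_{0}\neq 1$, where the paper simply cites \cite[Theorem 1.3.6]{BrayHoltRDougal} while you sketch the standard Schreier/Thompson argument; either justification is acceptable.
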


\begin{proof}
We can assume that $T\nleq M$. Since $G$ is almost simple, we have $%
M_{0}\neq 1$ (\cite[Theorem 1.3.6]{BrayHoltRDougal}) whence $\left\vert
M_{0}\right\vert \geq 2$. Moreover, $M_{0}\trianglelefteq M$, so by
maximality of $M$, the fact that $T$ is simple, and $1<M_{0}<T$, we get that 
$M=N_{G}\left( M_{0}\right) $ and $M_{0}=M\cap T=N_{T}\left( M_{0}\right) $.
Suppose, by contradiction, that $2\leq \left\vert M_{0}\right\vert \leq 5$.
Then $M_{0}$ is contained in a Sylow $p$-subgroup $P$ of $T$ where $p\in
\left\{ 2,3,5\right\} $ according to the case. If $M_{0}<P$ then $%
M_{0}<N_{P}\left( M_{0}\right) \leq N_{T}\left( M_{0}\right) =M_{0}$ - a
contradiction. Thus $M_{0}$ is a Sylow $p$-subgroup of $T$. But $2\leq
\left\vert M_{0}\right\vert \leq 5$ implies that $M_{0}$ is abelian so $%
M_{0}\leq C_{T}\left( M_{0}\right) \leq N_{T}\left( M_{0}\right) =M_{0}$.
Thus $M_{0}=Z\left( N_{T}\left( M_{0}\right) \right) $, and by Burnside's $p$%
-complement theorem (\cite[10.21]{Rose1978groups}), $M_{0}$ has a normal $p$%
-complement in $T$ - a contradiction since $T$ is simple.
\end{proof}

The following lemma is an easy corollary to a major result of \cite%
{GM2012products}. Let $x^{G}$ denote the conjugacy class of $x$ in $G$.

\begin{lemma}
\label{3cl} Let $T$ be a non-abelian simple group. Then there exist $\alpha
,\beta \in T$ such that $T=\alpha ^{T}\beta ^{T}S$, where $S$ is any subset
of $T$ of size at least $2$. In particular, there exist $\alpha ,\beta \in T$
such that $T=\alpha ^{T}\beta ^{T}\gamma ^{T}$ where $\gamma :=\beta
^{-1}\alpha ^{-1}$.
\end{lemma}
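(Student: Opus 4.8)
The plan is to obtain both assertions from a single theorem of Guralnick and Malle \cite{GM2012products}, namely that for every finite non-abelian simple group $T$ there exist conjugacy classes $C_{1}$ and $C_{2}$ of $T$ such that $C_{1}C_{2}\supseteq T\setminus\{1\}$ (every non-identity element of $T$ is a product of an element of $C_{1}$ by an element of $C_{2}$). Granting this, everything that remains is an elementary covering argument, so the real content has already been done in \cite{GM2012products}.

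First I would prove the statement about an arbitrary subset $S$. Fix classes $C_{1}=\alpha^{T}$ and $C_{2}=\beta^{T}$ as above, and choose two distinct elements $s_{1},s_{2}\in S$, which is possible since $|S|\geq 2$. Right-multiplying the inclusion $\alpha^{T}\beta^{T}\supseteq T\setminus\{1\}$ by $s_{i}$ gives $\alpha^{T}\beta^{T}s_{i}\supseteq(T\setminus\{1\})s_{i}=T\setminus\{s_{i}\}$ for $i=1,2$. Hence $\alpha^{T}\beta^{T}S\supseteq(T\setminus\{s_{1}\})\cup(T\setminus\{s_{2}\})=T$, the last equality because $s_{1}\neq s_{2}$. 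Since the reverse inclusion $\alpha^{T}\beta^{T}S\subseteq T$ is trivial, we conclude $T=\alpha^{T}\beta^{T}S$.

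For the ``in particular'' clause I would first observe that $C_{2}\neq\{1\}$: a non-abelian simple group has more than two conjugacy classes, so $C_{1}$ by itself cannot contain $T\setminus\{1\}$. Hence $|C_{2}|\geq 2$, and keeping $\alpha\in C_{1}$ fixed I can choose the representative $\beta\in C_{2}$ with $\beta\neq\alpha^{-1}$ (this does not change $\beta^{T}=C_{2}$). Then $\gamma:=\beta^{-1}\alpha^{-1}=(\alpha\beta)^{-1}\neq 1$, and since $Z(T)=1$ this forces $|\gamma^{T}|\geq 2$. Applying the first part with $S:=\gamma^{T}$ yields $T=\alpha^{T}\beta^{T}\gamma^{T}$.

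The one place where care is needed is precisely this choice of representative in the last step: a careless choice $\beta=\alpha^{-1}$ would land in the degenerate case $\gamma=1$, $\gamma^{T}=\{1\}$, for which $|S|\geq 2$ fails; the freedom to move $\beta$ within its class is what avoids this. Apart from that, the only genuine input is the Guralnick--Malle theorem, and no O'Nan--Scott analysis or case distinction on the isomorphism type of $T$ is required.
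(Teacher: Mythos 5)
Your proof is correct and takes essentially the same route as the paper: the Guralnick--Malle theorem from \cite{GM2012products} followed by the same elementary right-translation covering argument for an arbitrary $S$ with $|S|\geq 2$. The only minor variation is how you rule out $\gamma=1$: you re-choose the representative $\beta$ within $C_{2}$ (using $|C_{2}|\geq 2$), whereas the paper observes that either $\alpha^{T}\beta^{T}=T$ (in which case the claim is immediate) or $1\notin\alpha^{T}\beta^{T}$, which already forces $\alpha\beta\neq 1$; both are valid.
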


\begin{proof}
By \cite[Theorem 1.4]{GM2012products} there exist $\alpha ,\beta \in T$ with 
$\alpha ^{T}\beta ^{T}\cup \{1\}=T$. If $\alpha ^{T}\beta ^{T}=T$ then we
are done. Otherwise, $\alpha ^{T}\beta ^{T}=T-\left\{ 1\right\} $, and since
for any $s\in T$ we have $\left( T-\left\{ 1\right\} \right) s=T-\left\{
s\right\} $, we get that for any $s_{1}\neq s_{2}\in S$ we have $\alpha
^{T}\beta ^{T}s_{1}\cup \alpha ^{T}\beta ^{T}s_{2}=T$ and $T=\alpha
^{T}\beta ^{T}S$ follows. For proving $T=\alpha ^{T}\beta ^{T}\gamma ^{T}$
(for the same choice of $\alpha ,\beta \in T$) we can assume $\alpha
^{T}\beta ^{T}=T-\left\{ 1\right\} $. Hence $\gamma \neq 1$, implying $%
\left\vert \gamma ^{T}\right\vert \geq 2$. Now $T=\alpha ^{T}\beta
^{T}\gamma ^{T}$ follows by taking $S=\gamma ^{T}$ in the first claim.
\end{proof}

\begin{notation}
We denote by $\gamma _{\func{cp}}^{H}(G)$ the minimal positive integer $m$
such that there exist $m$ conjugates of $H\leq G$ whose product is $G$ ($%
\gamma _{\func{cp}}^{H}(G)=\infty $ if $G$ is not a product of conjugates of 
$H$).
\end{notation}

For the proof of Theorem \ref{Th_GeneralPrimitive} we use the classification
of finite primitive permutation groups as given by the O'Nan-Scott theorem,
for which we adopt the formulation and notation of \cite{LPS1988ON}. Thus $G$
is assumed to be a primitive permutation group on a set $\Omega $ of size $%
n=|G:H|$ where $H=G_{\alpha }$ is the stabilizer of some $\alpha \in \Omega $%
. The socle of $G$ is denoted $B\cong T^{k}$ with $k\geq 1$, where $T$ is a
simple group. Since $B$ acts transitively on $\Omega $ (being a non-trivial
normal subgroup of a primitive group), we have $G=BG_{\alpha }=BH$. Suppose
that $B$ is contained in the product of $t$ conjugates of $H$. Then $G$ is a
product of $t$ conjugates of $H$ (see \cite[Lemma 7(2)]{GaronziLevy}).
Moreover, since $B_{\alpha }=B\cap H\leq H$, we get that $B$ is certainly
contained in the product of $\gamma _{\func{cp}}^{B_{\alpha }}(B)$
conjugates of $H$. These considerations show that $\gamma _{\func{cp}%
}^{G_{\alpha }}(G)\leq \gamma _{\func{cp}}^{B_{\alpha }}(B)$ while $%
n=\left\vert G:G_{\alpha }\right\vert =\left\vert B:B_{\alpha }\right\vert $
and so in the cases where $B$ does not act regularly on $\Omega $ we will
prove our claim by exhibiting a suitable upper bound on $\gamma _{\func{cp}%
}^{B_{\alpha }}(B)$ (if $B$ acts regularly, $B_{\alpha }=1$ and $\gamma _{%
\func{cp}}^{B_{\alpha }}(B)=\infty $). Also note that since for any integer $%
m$ there are only finitely many isomorphism types of finite groups $A$ such
that $\left\vert A\right\vert \leq m$, for all primitive groups $G$
satisfying $\left\vert G\right\vert \leq m$ we get that $\gamma _{\func{cp}%
}^{B_{\alpha }}(B)$ is bounded above by some constant depending on $m$, and
so we may assume, that $\left\vert B\right\vert =\left\vert T\right\vert
^{k}>m$ for any fixed choice of $m$.

\section{\textbf{Type }\textrm{I}. $G$ is an affine primitive permutation
group}

\begin{proposition}
\label{Prop_Affine}Let $G$ be an affine primitive permutation group with a
non-trivial point stabilizer $H$. Then $G$ is a product of at most $%
1+c_{A}\log \left\vert G:H\right\vert $ conjugates of $H$, where $%
0<c_{A}\leq 3/\log 5<1.3$ is a universal constant.
\end{proposition}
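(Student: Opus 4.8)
The plan is to exploit the structure of an affine primitive group $G = V \rtimes H$, where $V = \mathbb{F}_p^d$ is the socle (an elementary abelian $p$-group acting regularly) and $H = G_\alpha$ is an irreducible subgroup of $\mathrm{GL}(V)$ with $\alpha$ the zero vector. Since $V$ acts regularly, $B_\alpha = 1$ and the reduction via $\gamma_{\mathrm{cp}}^{B_\alpha}(B)$ from the Preliminaries does not apply; instead I would work directly with cosets. The key observation is that a conjugate $H^v$ of $H$ by a vector $v \in V$ is the point stabilizer $G_v$, and for two vectors $v, w$ one has $G_v G_w \supseteq $ (everything mapping some point to another), so products of point stabilizers sweep out affine subspaces. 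More precisely, I would first show that $H \cdot H^{v}$ contains, besides $H$, a coset of the form $h V_0$ where $V_0$ is spanned by $v$ under the $H$-action, or better: analyze $H^{v_1} H^{v_2} \cdots H^{v_t}$ and show its intersection with $V$ grows.

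The cleanest route: note $G = VH$, so it suffices to cover $V$ by a product of conjugates of $H$ times $H$ itself, i.e. to find $v_1, \dots, v_t$ with $V \subseteq H^{v_1} \cdots H^{v_t} H$ — this accounts for the ``$1+$'' in the statement. Now $H^{u} = u H u^{-1}$, and since $H$ fixes $0$, an element of $H^u$ sends $u \mapsto u$... more usefully, $H^{-u}H^{u} \ni$ elements whose ``translation part'' ranges over $(1 - h)\cdot(2u)$ as $h$ runs over $H$, which for suitable $u$ spans a nonzero $H$-invariant direction; since $H$ is irreducible on $V$, iterating and using that the sums of such subspaces must eventually be all of $V$. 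To get the \emph{logarithmic} count rather than linear, I would use a doubling argument: if $W_t \leq V$ is the subspace swept after $t$ conjugates, then one more suitably chosen conjugate at least doubles $\dim W_t$ (pick $u$ outside the current subspace; the new directions $(1-h)u$ together with $W_t$ and its $H$-translates fill a subspace at least twice as large, because $W_t$ is already swept and $H$-invariance lets us add a full new irreducible-generated chunk). After $\lceil \log d \rceil$ steps we have $W_t = V$, and $d = \log_p |V| \leq \log |V| = \log|G:H|$, giving the bound with $c_A = 3/\log 5$ once one checks the base-$p$ versus base-$2$ conversion and the small-$p$ cases ($p \geq 5$ forced or handled by the constant, with $p = 2, 3$ absorbed into constants since only finitely many small cases or a slightly looser doubling suffices).

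I would organize the write-up as: (i) reduce to covering $V$ by $H^{v_1}\cdots H^{v_t}H$; (ii) a lemma that for any proper $H$-invariant subspace $W \lneq V$ and any $v \notin W$, the set $H^v \cdot (W+\langle \text{stuff} \rangle)$ contains $W' \supseteq W$ with $\dim W' \geq \min(d, 2\dim W)$ or at least $\dim W + 1$ sharpened to doubling by also translating by $H$; (iii) iterate $O(\log d)$ times; (iv) convert $\log d$ to $\log|G:H|$ using $|V| = p^d \geq 2^d$, so $d \leq \log|G:H|$, and track the constant to land under $3/\log 5 < 1.3$.

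The main obstacle will be step (ii) — making the doubling rigorous. The subtlety is that $(1-h)v$ for $h \in H$ need not span an $H$-invariant subspace, and the product $H^{v_1}H^{v_2}$ intersected with $V$ is $\{(1-h_1)v_1 + h_1(1-h_2)v_2 : h_i \in H\}$-ish, which is not obviously a subspace at all. So the real work is to show that after enough conjugates this set of translation parts becomes a full $H$-invariant subspace and that its dimension grows geometrically; I expect one must choose the $v_i$ adaptively (each $v_{i+1}$ outside the span generated so far) and invoke irreducibility of $H$ to guarantee that the $H$-span of a single new vector, added to the existing invariant subspace, at least doubles the dimension — which may in turn require first reducing to the case where $H$ acts \emph{homogeneously} or passing to a minimal invariant complement, a point that will need care. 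A cleaner alternative I would try if the subspace-growth is messy: directly bound $\gamma_{\mathrm{cp}}^H(G)$ by covering $V$ with cosets $vH^{w}$ and using that $|H^{w} \cap V| $ considerations force at most $\log_{|V|/|?|}$ many steps — but I expect the invariant-subspace doubling argument to be the intended and most transparent one.
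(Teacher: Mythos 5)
Your reduction to covering $V$ by a product of conjugates of $H$ (the source of the ``$1+$''), and your use of irreducibility applied to commutators of the form $v^{h^{-1}}-v\in H^vH\cap V$, are both on the right track and do appear in the paper's argument. But the centrepiece of your plan --- that one suitably chosen extra conjugate doubles the dimension of the swept subspace, so that $V$ is exhausted after about $\lceil\log d\rceil$ conjugates --- is not just unproven (as you concede, the swept set need not even be a subspace); it is false. If $G$ is a product of $m$ conjugates of $H$ then $|G|\leq |H|^m$, so $m\geq \log|G|/\log|H|$. Take $p=2$ and a prime $q$ for which $2$ is a primitive root modulo $q$, and let $H=C_q$ act irreducibly on $V=\mathbb{F}_2^{d}$ with $d=q-1$; then any covering needs at least about $d/\log d$ conjugates, which dwarfs $\lceil\log d\rceil$ for large $d$. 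So no argument can give geometric growth of the swept subspace per conjugate: the attainable (and required) count is linear in $\log n=d\log p$, not logarithmic in $d$, and your accounting leading to $c_A=3/\log 5$ does not track.

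The paper's mechanism is different and is the piece you are missing. Fix one nonzero $w:=v^{h^{-1}}-v$ (it exists since a central nonzero vector would force $V=\langle v\rangle$ central and $H$ normal, contradicting core-freeness). The whole cyclic group $\langle w\rangle$ lies in a product of $\lceil\log p\rceil+1$ conjugates of $H$: write a scalar $s$ in binary, note $2^jw=(2^jv)^{h^{-1}}-2^jv\in H^{2^jv}H$, so $sw$ is assembled from the factors $H^{2^jv}H$ (each meeting $V$ in a set containing both $0$ and $2^jw$), and a telescoping choice of conjugating vectors --- legitimate because $(H^{u}H)\cap V$ is invariant under conjugation by $V$ --- compresses the $2\lceil\log p\rceil$ factors to $\lceil\log p\rceil+1$. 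Irreducibility is then used not to double anything but to choose $h_1,\ldots,h_l\in H$ so that $\{w^{h_1},\ldots,w^{h_l}\}$ is a basis of $V$; covering each coordinate separately and telescoping again gives $1+l\lceil\log p\rceil$ conjugates in total. The constant $3/\log 5$ is not a base-conversion artifact: it is the maximum over primes of $\lceil\log p\rceil/\log p$, attained at $p=5$, which converts $1+l\lceil\log p\rceil$ into $1+(3/\log 5)\log|G:H|$. As written, your proposal has no viable route to the stated bound.
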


In order to prove Proposition \ref{Prop_Affine}, we review some basic
properties of affine primitive permutation groups. If $G$ is an affine
primitive permutation group, then it has exactly one minimal normal subgroup 
$V$, which is abelian so $V\cong {C_{p}^{l}}$ for some prime $p$ and some
natural number $l$. Moreover $G=VH$ and, viewing $V$ as the vector space
over ${\mathbb{F}_{p}}$, then $H$ acts by conjugation irreducibly as a group
of linear transformations on $V$. When convenient we will use additive
notation for $V$.

\begin{lemma}
\label{Lem_trick} Let $G$ be an affine primitive permutation group with
point stabilizer $H$ and minimal normal subgroup $V\cong {C_{p}^{l}}$. Let $%
h\in H$ and $v\in V$. Set $w:=v^{h^{-1}}-v$ and $k:=\lceil \log p\rceil $.
Then $\left\langle w\right\rangle $ is contained in a product of $k+1$
conjugates of $H$.
\end{lemma}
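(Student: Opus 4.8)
The plan is to realise $\langle w\rangle$ inside an explicitly chosen product $H^{u_1}H^{u_2}\cdots H^{u_k}H$ of $k+1$ conjugates of $H$, where the base points $u_i\in V$ depend on $v$. Work inside $G=VH$ with $V\trianglelefteq G$ abelian, written additively, write $x^{g}$ for $g^{-1}xg$, and recall that conjugation by any element of $H$ is an $\mathbb{F}_p$-linear map of $V$. The basic identity is that for $u\in V$ and $g\in H$ one has $u^{-1}gu=(u^{g^{-1}}-u)\,g\in H^{u}:=u^{-1}Hu$; thus an element of $H^{u}$ is a vector $u^{g^{-1}}-u$ of $V$ followed by the element $g$ of $H$. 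Multiplying one such element from each of $H^{u_1},\dots,H^{u_k}$ and then a final factor $g_{k+1}\in H$, and collecting all $V$-parts to the left, one gets an element of $G$ whose $V$-part is $\sum_{i=1}^{k}\bigl(u_i^{g_i^{-1}}-u_i\bigr)^{(g_1\cdots g_{i-1})^{-1}}$ and whose $H$-part is $g_1\cdots g_{k+1}$; requiring $g_1\cdots g_{k+1}=1$ makes the whole product lie in $V$.

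The key point is to choose the $u_i$ so that the conjugation twists telescope. Put $v_i:=2^{i-1}v$ for $1\le i\le k$ and $u_i:=v_i+v_{i+1}+\cdots+v_k$. Writing $\sigma_i:=g_1\cdots g_{i-1}$ (so $\sigma_1=1$ and $\sigma_{i+1}=\sigma_ig_i$), the $V$-part above equals $\sum_{i=1}^{k}\bigl(u_i^{\sigma_{i+1}^{-1}}-u_i^{\sigma_i^{-1}}\bigr)$, and a short rearrangement using $u_{i-1}-u_i=v_{i-1}$, $u_k=v_k$ and $\sigma_1=1$ collapses this to $\sum_{i=1}^{k}\bigl(v_i^{\sigma_{i+1}^{-1}}-v_i\bigr)$. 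Now fix $\lambda\in\{0,1,\dots,p-1\}$; since $\lambda<p\le 2^{\lceil\log p\rceil}=2^{k}$ we may write $\lambda=\sum_{i=1}^{k}\varepsilon_i2^{i-1}$ with $\varepsilon_i\in\{0,1\}$. Prescribe $\sigma_{i+1}:=h$ if $\varepsilon_i=1$ and $\sigma_{i+1}:=1$ if $\varepsilon_i=0$; the corresponding $g_i$ are $g_i=\sigma_i^{-1}\sigma_{i+1}$ for $i\le k$ and $g_{k+1}=\sigma_{k+1}^{-1}$, and indeed $g_1\cdots g_{k+1}=\sigma_{k+1}\sigma_{k+1}^{-1}=1$. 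For this choice linearity gives $v_i^{\sigma_{i+1}^{-1}}-v_i=v_i^{h^{-1}}-v_i=2^{i-1}(v^{h^{-1}}-v)=2^{i-1}w$ when $\varepsilon_i=1$, and $=0$ when $\varepsilon_i=0$; hence the product equals $\sum_i\varepsilon_i2^{i-1}w=\lambda w$. Therefore $\lambda w\in H^{u_1}H^{u_2}\cdots H^{u_k}H$ for every $\lambda$, so $\langle w\rangle\subseteq H^{u_1}\cdots H^{u_k}H$, a product of $k+1$ conjugates of $H$; the cases $w=0$ and $p=2$ (where $k=1$) are covered by the same formulas.

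The main obstacle is making the twist bookkeeping cancel. The naive attempt---take $u_i=v_i=2^{i-1}v$ and choose $g_i\in\{1,h\}$ according to the binary digits of $\lambda$---does not work, because the partial products $g_1\cdots g_{i-1}$ conjugate the later contributions $2^{i-1}w$ by powers of $h$, which cannot be undone. Passing to the cumulative base points $u_i=v_i+\cdots+v_k$ is precisely what forces the cross terms to telescope, so that the $i$-th binary digit of $\lambda$ contributes a clean, untwisted $2^{i-1}w$; establishing this collapse, and checking that the prescribed $\sigma_i$ really arise from $g_i\in H$ with $g_1\cdots g_{k+1}=1$, is where the substance of the argument lies. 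The remaining ingredients---the identity for elements of $H^{u}$, the $\mathbb{F}_p$-linearity of the conjugation action of $H$ on $V$, and the binary expansion of integers below $2^{\lceil\log p\rceil}$---are routine.
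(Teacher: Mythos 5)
Your proof is correct and follows essentially the same route as the paper: binary expansion of the coefficient, the observation that $2^{j}w\in H^{2^{j}v}H$, and a telescoping choice of base points to cut the count to $k+1$; indeed your product $H^{u_1}\cdots H^{u_k}H$ with $u_i=2^{i-1}v+\cdots+2^{k-1}v$ is exactly the product the paper obtains after conjugating its $k$ pairs $(H^{2^{j}v}H)$ and merging adjacent equal conjugates. The only difference is presentational: you verify membership by explicitly collecting $V$-parts and $H$-parts, whereas the paper conjugates the factors by elements of $V$ and lets coinciding conjugates absorb each other.
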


\begin{proof}
We can assume $w\neq 0_{V}=1_{G}$ for which the claim clearly holds. Then $w$
is of order $p$, and any element of $\left\langle w\right\rangle $ is of the
additive form $sw$ where the integer $s$ satisfies $0\leq s\leq p-1$. Since $%
k:=\lceil \log p\rceil $, the base 2 representation of $s$ takes the form $%
s=\tsum\limits_{j=0}^{k-1}b_{j}2^{j}$ ($b_{j}\in \left\{ 0,1\right\} $ for
all $0\leq j\leq k-1$). Now note that $w=v^{h^{-1}}-v=v^{-1}hvh^{-1}\in
H^{v}H$. Similarly, for any $c\in {\mathbb{F}_{p}}$ we have $cw=\left(
cv\right) ^{h^{-1}}-cv\in H^{cv}H$. Thus, identifying the powers $2^{j}$
with elements of ${\mathbb{F}_{p}}$, we see that $sw\in $ $\left(
H^{v}H\right) \left( H^{2v}H\right) \left( H^{2^{2}v}H\right) \cdots \left(
H^{2^{k-1}v}H\right) $, for any $0\leq s\leq p-1$, where we pick $0_{V}$
from the $j$-th factor $\left( H^{2^{j}v}H\right) $ in the product if $%
b_{j}=0$ and $2^{j}w$ if $b_{j}=1$. However, also note that since $V$ is
abelian, $\left( H^{2^{j}v}H\right) \cap V$ is invariant under conjugation
by any element of $V$. Hence, for any choice of $u_{0},...,u_{k-2}\in V$ we
have 
\begin{equation*}
sw\in \Pi _{H}:=\left( H^{v}H\right) ^{u_{0}}\left( H^{2v}H\right)
^{u_{1}}\left( H^{2^{2}v}H\right) ^{u_{2}}\cdots \left( H^{2^{k-2}v}H\right)
^{u_{k-2}}\left( H^{2^{k-1}v}H\right) \text{.}
\end{equation*}%
Finally, for the choice $u_{k-2}=2^{k-1}v$, $u_{k-3}=u_{k-2}+2^{k-2}v$ and
in general $u_{k-j}=u_{k-j+1}+2^{k-j+1}v$ for all $2\leq j\leq k$ where $%
u_{k-1}:=0_{V}$,\ we get that $\Pi _{H}$ is equal to a product of $k+1$
conjugates of $H$.
\end{proof}

\begin{lemma}
\label{Lem_maxf(p)}For each prime number $p$ define $f(p):=\lceil \log
p\rceil /\log p$. Then $f(p)$ has a global maximum at $p=5$. Consequently 
\begin{equation}
\lceil \log p\rceil \leq (3/\log 5)\log p\text{, for every prime }p\text{.}
\label{Ineq_log2p}
\end{equation}
\end{lemma}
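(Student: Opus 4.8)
The plan is to analyze $f(p) = \lceil \log p \rceil / \log p$ directly for small primes and then show the function is eventually decreasing, so that the maximum must occur at one of finitely many small primes. First I would compute $f(p)$ for the primes $p = 2, 3, 5, 7, 11, 13$: since $\log 2 = 1$ we get $f(2) = 1$; for $p = 3$, $\lceil \log 3 \rceil = 2$ so $f(3) = 2/\log 3 \approx 1.2619$; for $p = 5$, $\lceil \log 5 \rceil = 3$ so $f(5) = 3/\log 5 \approx 1.2920$; for $p = 7$, $\lceil \log 7 \rceil = 3$ so $f(7) = 3/\log 7 \approx 1.0686$; for $p = 11$ and $p = 13$, $\lceil \log p \rceil = 4$ giving $f(11) = 4/\log 11 \approx 1.1566$ and $f(13) = 4/\log 13 \approx 1.0813$. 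So among these, $p = 5$ gives the largest value.

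Next I would establish that no larger prime can beat $p = 5$. Write $k = \lceil \log p \rceil$, so $k - 1 < \log p \le k$, i.e. $2^{k-1} < p \le 2^{k}$. On this range $f(p) = k / \log p$ is a decreasing function of $p$ (numerator fixed, denominator increasing), so its supremum over primes with a given value of $k$ is approached as $p \to 2^{k-1}$ from above, giving $f(p) < k/(k-1)$. For $k \ge 4$ this bound is $k/(k-1) \le 4/3 \approx 1.333$, which is not yet below $f(5)$, so a slightly sharper estimate is needed: for $k \ge 4$ the smallest prime exceeding $2^{k-1}$ is at least $2^{k-1} + 1$, so $f(p) \le k / \log(2^{k-1}+1) < k/(k-1)$, and one checks that for $k = 4$ the relevant primes are exactly $11$ and $13$ (already handled above, both $< f(5)$), while for $k \ge 5$ we have $f(p) \le k/(k-1) \le 5/4 = 1.25 < 1.292 \le f(5)$. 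Hence for all primes $p \ge 7$ we have $f(p) < f(5)$, and the global maximum is at $p = 5$.

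Finally, the displayed inequality $\lceil \log p \rceil \le (3/\log 5)\log p$ is just the statement $f(p) \le f(5) = 3/\log 5$ rewritten, so it follows immediately once the global maximum claim is proved. The main obstacle is the slightly delicate region $k = 4$ (primes $11$ and $13$), where the crude bound $k/(k-1) = 4/3$ exceeds $f(5)$; this is why one must check those two primes individually rather than relying purely on the monotonicity argument. Everything else is a routine finite computation together with the elementary observation that $k/\log p$ decreases on each dyadic block.
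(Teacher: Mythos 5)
Your proof is correct and follows essentially the same strategy as the paper's: verify $f(p)<f(5)$ explicitly for the small primes and bound $f(p)$ for large primes by an elementary estimate that is eventually below $3/\log 5$. The only difference is that the paper uses the slightly sharper bound $f(p)\leq 1+1/\log p$, which already settles all $p\geq 11$ at once, whereas your dyadic-block bound $k/(k-1)$ with $k=\lceil \log p\rceil$ is too weak for $k=4$ and forces you to check $p=11$ and $p=13$ by hand --- harmless, since the verification remains a finite computation.
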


\begin{proof}
First check that $1+1/\log 11<1.29<3/\log 5$. Then, using this, we get: 
\begin{equation*}
f(p)\leq (\log p+1)/\log p=1+1/\log p<3/\log 5=f(5)\text{, }\forall p\geq 11%
\text{,}
\end{equation*}%
and for $p=2,3,7$ we verify explicitly that $f\left( p\right) <f\left(
5\right) $. Hence $f(p)$ has a global maximum $f\left( 5\right) =3/\log 5$
at $p=5$. Finally, $\lceil \log p\rceil =f(p)\log p$ $\leq f(5)\log p$.
\end{proof}

\begin{proof}[\textbf{Proof of Proposition \protect\ref{Prop_Affine}}]
Using the notation introduced after the statement of the proposition, $\log
|G:H|~=$ $\log |V|~=$ $\log p^{l}=l\log p$. Using Inequality \ref{Ineq_log2p}%
, we obtain:%
\begin{equation*}
1+l\lceil \log p\rceil \leq 1+(3/\log 5)l\log p=1+(3/\log 5)\log |G:H|\text{.%
}
\end{equation*}

Thus, it is enough to show that $G$ is a product of at most $1+$ $l\lceil
\log p\rceil $ conjugates of $H$.

Fix a non-zero vector $v\in V$. If $v$ is central in $G$ then $V=\langle
v\rangle $ by minimality of $V$. It follows that $H$ is a non-trivial normal
subgroup of $HV=G$ since $V$ is central - a contradiction to $H$ being
core-free. Therefore $v$ is not central, and there is some $h\in H$ with $%
v^{h^{-1}}\neq v$. Set $w:=v^{h^{-1}}-v$.

We claim that there are $l$ elements $h_{1},\ldots ,h_{l}\in H$ such that $%
B:=\{w^{h_{1}},\ldots ,w^{h_{l}}\}$ is a vector space basis of $V=C_{p}^{l}$%
. Note that since $w\neq 0_{V}$, this claim is immediate for $l=1$, and
hence we assume $l\geq 2$. Suppose by contradiction that $1\leq m<l$ is the
maximal integer such that there exist $h_{1},h_{2},\ldots ,h_{m}\in H$ for
which $B=\{w^{h_{1}},\ldots ,w^{h_{m}}\}$ is linearly independent. It
follows that for any $h\in H$, $w^{h}\in Span\left( B\right) $. Thus $%
Span\left( B\right) =Span\left( \left\{ w^{h}|h\in H\right\} \right) $. This
shows that $Span\left( B\right) $ is a proper non-trivial $H$-invariant
subspace of $V$, contradicting the fact that $H$ acts irreducibly on $V$.
Thus there exists a basis of $V$ of the form $B:=\{w^{h_{1}},\ldots
,w^{h_{l}}\}$.

For each $v\in V$ there exist $s_{1},...,s_{l}\in {\mathbb{F}_{p}}$ for
which $v=\Sigma _{i=1}^{l}s_{i}w^{h_{i}}$. Applying Lemma \ref{Lem_trick} to
each $w^{h_{i}}$ separately, we get that each $v\in V$ belongs to $\Pi
_{1}\cdots \Pi _{l}$, where each $\Pi _{i}$ is a product of $\lceil \log
p\rceil +1$ conjugates of $H$. But, as in the proof of Lemma \ref{Lem_trick}%
, this shows that $V\subseteq \Pi _{1}^{u_{1}}\cdots \Pi _{l-1}^{u_{l-1}}\Pi
_{l}$ for any choice of $u_{1},...,u_{l-1}\in V$, and one can choose these
elements so that the product $\Pi _{1}^{u_{1}}\cdots \Pi _{l-1}^{u_{l-1}}\Pi
_{l}$ is a product of at most $l\lceil \log p\rceil +1$ conjugates of $H$.
\end{proof}

From Proposition \ref{Prop_Affine} it follows that if $G$ is an affine
primitive permutation group then $\gamma _{\func{cp}}^{H}(G)\leq c_{I}\log n$
where the constant $c_{I}$ satisfies $0<c_{I}<2.3$.

\section{\textbf{Type }\textrm{II}. $G$ is an almost simple primitive
permutation group\label{Section_almost}}

In this case we have $k=1$ and $B=T$. Note that $T$ is a non-abelian simple
group acting transitively on $\Omega $. Furthermore, $T$ does not act
regularly on $\Omega $ by \cite{LPS1988ON}.

First suppose that $|G|<n^{9}$. By \cite[Theorem 3]{LNS2}, since $T_{\alpha
} $ is a subset of $T$ of size at least $2$ (because $T$ does not act
regularly), there exists a constant $c_{1}$ such that $T$ is a product of
less than $c_{1}\log |T|$ conjugates of $T_{\alpha }$. Now $|T|\leq
|G|<n^{9} $ implies that $\gamma _{\mathtt{cp}}^{T_{\alpha }}(T)<$ $9\cdot
c_{1}\log n$.

Assume that $|G|\geq n^{9}$. By \cite{LiebeckMinDeg1984} one of the
following holds:

\begin{enumerate}
\item $T=A_{m}$, where $m\geq 5$ and either

\begin{enumerate}
\item $\Omega $ is the set of all subsets of size $k$ of $\{1,\ldots ,m\}$, $%
n=\binom{m}{k}$ or

\item $\Omega $ is the set of all partitions of $\{1,\ldots ,m\}$ into $a$
subsets of size $b$ where $ab=m$, $a>1$, $b>1$; $n=m!/({(b!)}^{a}a!)$.
\end{enumerate}

\item $T$ is a classical simple group acting on an orbit of subspaces of the
natural module, or (in the case $T=PSL\left( d,q\right) $) on pairs of
subspaces of complementary dimensions.
\end{enumerate}

Since $n=\left\vert G:G_{\alpha }\right\vert =\left\vert T:T_{\alpha
}\right\vert $, and since $G$ is almost simple, we have by \cite[Lemma 2.7
(i)]{AschGur1989} that\ $|G:T|\leq |$\textrm{Out}$(T)|<n$. This gives $%
\left\vert T\right\vert >n^{8}=\left( \frac{\left\vert T\right\vert }{%
\left\vert T_{\alpha }\right\vert }\right) ^{8}$, implying $\frac{\log
\left\vert T\right\vert }{\log \left\vert T_{\alpha }\right\vert }<\frac{8}{7%
}<2$. If $T_{\alpha }$ is maximal in $T$, we can conclude from \cite[Theorem
2]{LNS} that there exist a universal constant $c_{2}$ and a universal
function $f:\mathbb{N}\rightarrow \mathbb{N}$, such that for all $T$
satisfying $\left\vert T\right\vert >f\left( 2\right) $ it holds that $%
\gamma _{\mathtt{cp}}^{T_{\alpha }}(T)\leq $ $c_{2}\frac{\log \left\vert
T\right\vert }{\log \left\vert T_{\alpha }\right\vert }$. Now we claim that
this conclusion is in fact valid even if $T_{\alpha }$ is not maximal in $T$%
. More precisely, we claim that \cite[Theorem 2]{LNS} is valid for all
subgroups belonging to the families listed in \cite[Lemma 3.1]{LNS} in the
case $T=A_{m}$, and in \cite[Lemma 4.3]{LNS} in the case that $T$ is a
classical group. Note that these families include the $\left( T,T_{\alpha
}\right) $ of \cite{LiebeckMinDeg1984} listed above. Our claim is based on a
close examination of the use of the maximality assumption in the proof of 
\cite[Theorem 2]{LNS}. We find that the maximality assumption is used only
in two places. First, in appealing to \cite[Theorem 1]{LNS} in order to
discard cases of simple groups of Lie type of small Lie rank. Here we
replace \cite[Theorem 1]{LNS} by \cite[Theorem 1.3]{GPSSBoundedProof2013},
which applies to any subset of $T$ of size at least $2$. The second use of
the maximality assumption is to identify the possible isomorphism types for
maximal subgroups of the remaining simple groups, according to the
O'Nan-Scott classification in the alternating case and the Aschbacher
classification in the classical case. These are precisely the families
listed in \cite[Lemma 3.1]{LNS} and in \cite[Lemma 4.3]{LNS}. The rest of
the proof of Theorem 2 of \ \cite{LNS} carries through even when the
subgroups in question are not actually maximal.

Finally, by Lemma \ref{Lem_LNSTranslateToIndex} and Lemma \ref%
{Lem_|Talpha|>=4} we get that $\gamma _{\mathtt{cp}}^{H}(G)\leq c_{II}\log n$
for some universal constant $c_{II}>0$, for all primitive almost simple $G$.

\section{\textbf{Type \textrm{III}(a)}. $G$ is a primitive permutation group
of diagonal type.}

Here $B_{\alpha }$ is the diagonal subgroup of $B$ ($\Delta $ in the
notation of Proposition \ref{Prop_diagonal} below) and $n=|G:G_{\alpha
}|=\left\vert T\right\vert ^{k-1}$, where $k\geq 2$.

\begin{proposition}
\label{Prop_diagonal} Let $T$ be a non-abelian simple group, $k$ a positive
integer, $B:=T^{k}$. Set $\Delta :=\{(t,t,\ldots ,t)\ :\ t\in T\}\leq B$.
Then $k\leq \gamma _{\mathtt{cp}}^{\Delta }(B)\leq 3k-2$.
\end{proposition}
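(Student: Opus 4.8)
The plan is to prove the two inequalities separately; the lower bound is immediate and the upper bound is the substance.

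For the lower bound, if $B=\Delta^{g_{1}}\cdots\Delta^{g_{m}}$ with $g_{i}\in B$, then $|T|^{k}=|B|\le\prod_{i}|\Delta^{g_{i}}|=|T|^{m}$, forcing $m\ge k$; this is the elementary estimate recalled in the Introduction, and together with the upper bound it also shows that $\gamma_{\mathrm{cp}}^{\Delta}(B)$ is finite. For the upper bound, the case $k=1$ is trivial since $B=T=\Delta$, so assume $k\ge2$. By Lemma \ref{3cl} fix $\alpha,\beta\in T$ with $\alpha^{T}\beta^{T}S=T$ for every $S\subseteq T$ with $|S|\ge2$, and fix $\delta\in T\setminus\{1\}$. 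Set $c_{3}:=\alpha$, $c_{2}:=\alpha\beta$, $c_{1}:=\alpha\beta\delta$. For $2\le j\le k$ and $1\le r\le3$ let $g_{j,r}\in B$ be the tuple having $c_{r}$ in coordinate $j$ and $1$ in every other coordinate. I claim that the product of $3(k-1)+1=3k-2$ conjugates of $\Delta$,
\[
P\;:=\;\Bigl(\prod_{j=2}^{k}\Delta^{g_{j,1}}\Delta^{g_{j,2}}\Delta^{g_{j,3}}\Bigr)\,\Delta,
\]
equals $B$, which yields $\gamma_{\mathrm{cp}}^{\Delta}(B)\le3k-2$.

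The engine is the following local fact, with $c_{1},c_{2},c_{3}$ as above: \emph{for every $z\in T$ there exist $u_{1},u_{2},u_{3}\in T$ with $u_{1}^{c_{1}}u_{2}^{c_{2}}u_{3}^{c_{3}}u_{3}^{-1}u_{2}^{-1}u_{1}^{-1}=z$.} To see this, substitute $v_{i}:=u_{i}^{c_{i}}$ and repeatedly rewrite conjugates, using that for a fixed $g\in T$ the set $\{g^{v}:v\in T\}$ is the whole class $g^{T}$; this identifies the set of all such products with $c_{3}^{\,T}(c_{3}^{-1}c_{2})^{T}(c_{2}^{-1}c_{1})^{T}c_{1}^{-1}=\alpha^{T}\beta^{T}\delta^{T}c_{1}^{-1}$, which is $Tc_{1}^{-1}=T$ by the choice of $\alpha,\beta$ applied with $S:=\delta^{T}$ (of size $\ge2$). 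To deduce $P=B$, fix a target $(x_{1},\dots,x_{k})\in T^{k}$, write an element of $\Delta^{g_{j,r}}$ as the tuple with entry $u_{j,r}$ in each coordinate $\ne j$ and $u_{j,r}^{c_{r}}$ in coordinate $j$, write the last factor $\Delta$ as $(t,\dots,t)$, and put $W_{j}:=u_{j,1}u_{j,2}u_{j,3}$, $W_{j}^{\ast}:=u_{j,1}^{c_{1}}u_{j,2}^{c_{2}}u_{j,3}^{c_{3}}$, $V_{j}:=W_{2}\cdots W_{j-1}$ (so $V_{2}=1$). A direct computation shows that coordinate $1$ of the corresponding point of $P$ is $W_{2}\cdots W_{k}\,t$, while for $2\le i\le k$ coordinate $i$ equals $(W_{i}^{\ast}W_{i}^{-1})^{V_{i}^{-1}}\cdot(W_{2}\cdots W_{k}\,t)$. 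Now choose the parameters of the blocks $j=2,3,\dots,k$ successively: once blocks $2,\dots,j-1$ are fixed, $V_{j}$ is determined, and the local fact lets us pick $u_{j,1},u_{j,2},u_{j,3}$ with $W_{j}^{\ast}W_{j}^{-1}=(x_{j}x_{1}^{-1})^{V_{j}}$; finally put $t:=(W_{2}\cdots W_{k})^{-1}x_{1}$. Then coordinate $1$ is $x_{1}$ and coordinate $i$ is $\bigl((x_{i}x_{1}^{-1})^{V_{i}}\bigr)^{V_{i}^{-1}}x_{1}=x_{i}$ for every $i$, so $(x_{1},\dots,x_{k})\in P$ and $P=B$.

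The crux, and the only real obstacle, is the bookkeeping that keeps the count at exactly $3k-2$. Three conjugates for each of coordinates $2,\dots,k$ plus one extra copy of $\Delta$ for coordinate $1$ is the only affordable budget; but once one parameter has been spent fixing coordinate $1$, the corrections that reach a given coordinate form a product of just \emph{three} conjugacy classes, so one must invoke the sharp form of Lemma \ref{3cl} (three classes, with the third allowed to be an arbitrary set of size $\ge2$) rather than any two‑class statement, which in the worst case gives only $T\setminus\{1\}$. Placing the extra copy of $\Delta$ at the far right — so that coordinate $1$ is corrected last, after the remaining coordinates have been fixed block by block — is what makes the dependencies ($V_{j}$ on the earlier blocks, $t$ on all of them) resolve cleanly, and I expect that this arrangement, rather than any single computation, will be the delicate point to get exactly right.
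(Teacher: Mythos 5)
Your proof is correct, and it rests on the same two pillars as the paper's argument: the order-counting estimate for the lower bound, and Lemma \ref{3cl} for the upper bound, with the same budget of three conjugates for each of the coordinates $2,\ldots,k$ plus one extra copy of $\Delta$. Where you diverge is in how the product is verified to be all of $B$. The paper takes as its block for coordinate $i$ the four-factor product $\Delta\Delta^{\tau_i(a)}\Delta^{\tau_i(b)}\Delta$ (with $a=\alpha^{-1}$, $b=\beta^{-1}\alpha^{-1}$): choosing the last diagonal entry to be $(xyz)^{-1}$ makes every coordinate other than $i$ trivial, so each block already contains the whole factor $T_i$ with no interaction between blocks, and the count $3k-2$ is then recovered by letting adjacent blocks share their diagonal factors via $\Delta\Delta=\Delta$. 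Your three-factor blocks $\Delta^{g_{j,1}}\Delta^{g_{j,2}}\Delta^{g_{j,3}}$ cannot kill the off-coordinate drift $W_j$, so you compensate with the sequential choice of blocks (the conjugation by $V_j$) and a single trailing $\Delta$; your ``local fact'' is correct, since the set of products $u_1^{c_1}u_2^{c_2}u_3^{c_3}(u_1u_2u_3)^{-1}$ is exactly $c_3^T(c_3^{-1}c_2)^T(c_2^{-1}c_1)^Tc_1^{-1}=\alpha^T\beta^T\delta^Tc_1^{-1}=T$, where you use the first assertion of Lemma \ref{3cl} with $S=\delta^T$ (of size at least $2$ because $Z(T)=1$), while the paper uses the second assertion with $\gamma=\beta^{-1}\alpha^{-1}$. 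Both routes give the same bound; the paper's arrangement buys an essentially dependence-free verification (each coordinate is handled by its own block), whereas yours shows the same count can be reached without sharing diagonal factors between blocks, at the price of the telescoping bookkeeping you correctly identified as the delicate point.
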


\begin{proof}
Suppose $B$ is a product of $m$ conjugates of $\Delta $. Then $\Delta \cong
T $ implies $|T|^{k}=|B|~\leq |\Delta |^{m}=|T|^{m}$. This proves $k\leq
\gamma _{\mathtt{cp}}^{\Delta }(B)$. For proving $\gamma _{\mathtt{cp}%
}^{\Delta }(B)\leq 3k-2$, choose $\alpha ,\beta ,\gamma \in T$ as in Lemma %
\ref{3cl}. Set $a:=\alpha ^{-1}$ and $b:=\gamma $. Then $T=\alpha ^{T}\beta
^{T}\gamma ^{T}=(a^{-1})^{T}(ab^{-1})^{T}b^{T}$.

Let $i\in \{1,\ldots ,k\}$. Let $\tau _{i}:T\rightarrow T^{k}$ be the map
that sends $t\in T$ to the element of $T^{k}$ that has $t$ in the $i$-th
component and $1$ elsewhere. We denote $T_{i}:=\tau _{i}(T)$. Consider $%
D_{i}:=\Delta \Delta ^{\tau _{i}(a)}\Delta ^{\tau _{i}(b)}\Delta $. We prove
that $D_{i}\supseteq T_{i}$. An element of $D_{i}$ has the form 
\begin{equation*}
(xyzw,\ xyzw,\ \ldots ,\ xyzw,\ \underset{i\text{-th entry}}{\underbrace{%
xy^{a}z^{b}w}},\ xyzw,\ \ldots ,\ xyzw)
\end{equation*}%
where $x,y,z,w\in T$ are arbitrary. In order to prove that $D_{i}\supseteq
T_{i}$, choose arbitrary $x,y,z\in T$ and $w=(xyz)^{-1}$. Then for the $i$%
-th component we have 
\begin{gather*}
xy^{a}z^{b}w=xy^{a}z^{b}(xyz)^{-1}=xa^{-1}yab^{-1}zbz^{-1}y^{-1}x^{-1} \\
=(xa^{-1}x^{-1})(\left( xy\right) \left( ab^{-1}\right) \left(
y^{-1}x^{-1}\right) )(\left( xyz\right) b\left( z^{-1}y^{-1}x^{-1}\right) )
\\
\in (a^{-1})^{T}(ab^{-1})^{T}b^{T}=T\text{.}
\end{gather*}%
Since $\left\{ \left( x,xy,xyz\right) |x,y,z\in T\right\} =T^{3}$, we can
deduce $D_{i}\supseteq T_{i}$.

It follows that $B=T_{1}\cdots T_{k}=\Delta T_{2}\cdots T_{k}\subseteq
\Delta D_{2}\cdots D_{k}=D_{2}\cdots D_{k}.$ Therefore 
\begin{eqnarray}
B &=&D_{2}\cdots D_{k}=(\Delta \Delta ^{\tau _{2}(a)}\Delta ^{\tau
_{2}(b)}\Delta )\cdots (\Delta \Delta ^{\tau _{k}(a)}\Delta ^{\tau
_{k}(b)}\Delta )  \notag \\
&=&(\Delta \Delta ^{\tau _{2}(a)}\Delta ^{\tau _{2}(b)})\cdots (\Delta
\Delta ^{\tau _{k}(a)}\Delta ^{\tau _{k}(b)})\Delta \text{,}  \notag
\end{eqnarray}%
and $B$ is a product of $3(k-1)+1=3k-2$ conjugates of $\Delta $.
\end{proof}

By Proposition \ref{Prop_diagonal} we have $\gamma _{\mathtt{cp}}^{\Delta
}(B)\leq 3k-2$. On the other hand 
\begin{equation*}
\log |G:G_{\alpha }|~=\left( k-1\right) \log |T|~\geq \left( k-1\right) \log
60>5\left( k-1\right) \text{.}
\end{equation*}%
Comparing the numbers we see that $B$ is the product of less than $\log
|G:G_{\alpha }|$ conjugates of $B_{\alpha }$, and so we have $\gamma _{%
\mathtt{cp}}^{H}(G)\leq c_{III(a)}\log n$ with $0<c_{III(a)}\leq 1$.

\section{\textbf{Type \textrm{III}(b)}. $G$ is a primitive permutation group
of product action type.}

Let $R$ be a primitive permutation group of type II or III(a) on a set $%
\Gamma $. For $\ell >1$, let $W=R\wr S_{\ell }$, and take $W$ to act on $%
\Omega =\Gamma ^{\ell }$ in its natural product action. Then for $\gamma \in
\Gamma $ and $\alpha =(\gamma ,\ldots ,\gamma )\in \Omega $ we have $%
W_{\alpha }=R_{\gamma }\wr S_{\ell }$, and $n={|\Gamma |}^{\ell }$. If $K$
is the socle of $R$ then the socle $B$ of $W$ is $K^{\ell }$, and $B_{\alpha
}={(K_{\gamma })}^{\ell }\not=1$. If $G$ is primitive of type III(b), then $%
G $ satisfies $B\leq G\leq W$ and acts transitively on the $\ell $ factors
of $B=K^{\ell }$. In particular, $soc(G)=soc(W)=K^{\ell }$. By the
discussion of cases II and III(a) we know that $K$ is the product of at most 
$\max \left\{ c_{II},c_{III(a)}\right\} \cdot \log |K:K_{\gamma }|$
conjugates of $K_{\gamma }$. Since $B=K^{\ell }$ and $B_{\alpha }={%
(K_{\gamma })}^{\ell }$, we get that $B$ is the product of at most $\max
\left\{ c_{II},c_{III(a)}\right\} \cdot \log |K:K_{\gamma }|$ conjugates of $%
B_{\alpha }$. Now $\left\vert G:G_{\alpha }\right\vert =\left\vert \Gamma
\right\vert ^{\ell }$, and, since $K$ acts transitively on $\Gamma $, $%
\left\vert \Gamma \right\vert =|K:K_{\gamma }|$. Hence 
\begin{equation*}
\log n=\log \left\vert G:G_{\alpha }\right\vert =\log \left\vert \Gamma
\right\vert ^{\ell }=\ell \log |K:K_{\gamma }|\text{,}
\end{equation*}%
and we have proved that $\gamma _{\mathtt{cp}}^{H}(G)\leq c_{III(b)}\log n$
with $0<c_{III(b)}\leq \max \left\{ c_{II},c_{III(a)}\right\} $.

\section{\textbf{Type \textrm{III}(c)}. $G$ is a primitive permutation group
of twisted wreath product type.}

Let $P$ be a transitive permutation group of degree $k$, acting on $\left\{
1,...,k\right\} $, and let $Q\leq P$ be the stabilizer of $1$. Let $\varphi
:Q\rightarrow $\textrm{Aut}$(T)$ be a homomorphism such that $\varphi (Q)$
contains all the inner automorphisms of $T$. Let 
\begin{equation*}
B_{0}=\{f:P\rightarrow T\ :\ f(pq)=f(p)^{\varphi (q)}\ \forall p\in P,\ q\in
Q\}\text{.}
\end{equation*}%
Then $B_{0}$ is a group with pointwise multiplication. let $L=\left\{
l_{1},...,l_{k}\right\} \subseteq P$ be an arbitrary fixed left transversal
of $Q$ in $P$. By definition of $B_{0}$, a function $f\in B_{0}$ is
determined by its values on $L$. On the other hand, the values of $f$ on $L$
can be arbitrary, and therefore we get $B_{0}\cong T^{k}$. More
specifically, for $\ell \in L$ and $t\in T$ define $f_{t,\ell }:P\rightarrow
T\ $ by: 
\begin{equation*}
f_{t,\ell }(x):=\left\{ 
\begin{array}{cc}
t^{\varphi (\ell ^{-1}x)} & \text{if}\ x\in \ell Q, \\ 
1 & \text{if}\ x\not\in \ell Q%
\end{array}%
\right. \forall x\in P\text{.}
\end{equation*}%
We claim that $f_{t,\ell }\in B_{0}$. Indeed, let $p\in P,\ q\in Q$, and
consider $f_{t,\ell }\left( pq\right) $. If$\ pq\not\in \ell Q$ then $%
p\not\in \ell Q$ and $\ f(pq)=f(p)=1$ so $f(pq)=f(p)^{\varphi (q)}$ holds. If%
$\ pq\in \ell Q$ then $p\in \ell Q$ and there exists $q_{0}\in Q$ such that $%
p=\ell q_{0}$. Hence 
\begin{align*}
f_{t,\ell }\left( pq\right) & =f_{t,\ell }\left( \ell q_{0}q\right)
=t^{\varphi (\ell ^{-1}\ell q_{0}q)}=t^{\varphi (q_{0}q)}=t^{\varphi
(q_{0})\varphi (q)} \\
& =\left( t^{\varphi (q_{0})}\right) ^{\varphi (q)}=f_{t,\ell }\left( \ell
q_{0}\right) ^{\varphi (q)}=f(p)^{\varphi (q)}\text{.}
\end{align*}%
Furthermore, if $\ell =l_{i}$ then $f_{t,\ell }$ corresponds to the element
of $T^{k}$ that has $t$ in the $i$-th component and $1$ elsewhere. To see
this we just have to check that $f_{t,l_{i}}\left( l_{j}\right) $ satisfies $%
f_{t,l_{i}}\left( l_{j}\right) =t$ if $i=j$ and $f_{t,l_{i}}\left(
l_{j}\right) =1$ if $i\neq j$ and this is immediate from the definition.
Thus we can construct an explicit isomorphism $B_{0}\rightarrow T^{k}$ which
maps $\left\{ f_{t,l_{i}}|t\in T\right\} $ onto $T_{i}$, where $T_{i}$ is
the $i$-th direct factor of $T^{k}$, $1\leq i\leq k$. From now on we
identify $\left\{ f_{t,l_{i}}|t\in T\right\} $ with $T_{i}$. Furthermore, $P$
acts on $B_{0}$ in the following way: if $f\in B_{0}$ and $p\in P$ define $%
f^{p}(x):=f(px)$ for all $x\in P$. The semidirect product $G:=B_{0}\rtimes P$
with respect to this action is called the twisted wreath product (of $P$ and 
$T$). Then $G$ acts transitively by right multiplication on the set $\Omega $
of size $n=|B_{0}|$ of all right cosets of $P$. This action is not always
primitive. If it is, $G$ belongs to class III(c). In this case $B=B_{0}\cong
T^{k}=T_{1}\times \cdots \times T_{k}$ is a normal subgroup (the unique
minimal one) in $G$ and acts regularly on $\Omega $, and we take $G_{\alpha
}=P$. We have $n=\left\vert T\right\vert ^{k}$.

Set, for each $1\leq i\leq k$, $Q_{i}:=l_{i}Ql_{i}^{-1}$. We prove that $%
Q_{i}$ leaves $T_{i}$ invariant with respect to the action of $P$ on $B_{0}$%
. For this we have to show that if $p\in Q_{i}$, namely, $p=l_{i}ql_{i}^{-1}$
for some $q\in Q$, then, for all $x\in P$, $x\in l_{i}Q$ if and only if $%
px\in l_{i}Q$. But $px \in l_iQ$ means $l_iql_i^{-1} x \in l_iQ$, and this
is true if and only if $ql_i^{-1}x \in Q$, which, since $q \in Q$, is
equivalent to $l_i^{-1}x \in Q$, which is equivalent to $x \in l_iQ$.

Thus the action of $P$ on $B_{0}$ induces an action of $Q_{i}$ on $T_{i}$
for each $1\leq i\leq k$. Note that for $p=l_{i}ql_{i}^{-1}\in Q_{i}$ and $%
x=l_{i}q_{0}\in l_{i}Q$, we get 
\begin{align*}
f_{t,l_{i}}^{p}\left( x\right) =f_{t,l_{i}}\left( px\right) =t^{\varphi
(l_{i}^{-1}px)}=t^{\varphi (l_{i}^{-1}l_{i}ql_{i}^{-1}l_{i}q_{0})} \\
= t^{\varphi (qq_{0})}=\left( t^{\varphi (q)}\right) ^{\varphi
(q_{0})}=f_{t^{\varphi (q)},l_{i}}\left( x\right) \text{.}
\end{align*}
Since $f_{t,l_{i}}^{p}\left( x\right) =f_{t^{\varphi (q)},l_{i}}\left(
x\right) $ clearly holds also for any $x\notin l_{i}Q$ we get $%
f_{t,l_{i}}^{p}=f_{t^{\varphi (q)},l_{i}}$ and so $Q_{i}$ acts on $T_{i}$ as 
$\varphi \left( Q\right) $.

By Lemma \ref{3cl} there exist $t_{i,1},t_{i,2},t_{i,3}\in T_{i}$ for each $%
1\leq i\leq k$ such that $%
T_{i}=t_{i,1}^{T_{i}}t_{i,2}^{T_{i}}t_{i,3}^{T_{i}} $. For each $j\in
\{1,2,3\}$ set $O_{i,j}:=\left\{ p^{-1}t_{i,j}p|p\in Q_{i}\right\} $. In $%
G:=B_{0}\rtimes P$, the action of $P$ on $B_{0}$ is the restriction of the
conjugation action of $G$ on itself, and hence $O_{i,j}$ is the orbit of $%
t_{i,j}$ under the action of $Q_{i}$ on $T_{i}$. Since, by assumption, 
\textrm{Inn}$\left( T\right) \leq \varphi \left( Q\right) $, $O_{i,j}$ is a
normal subset of $T_{i}$, and in particular contains $t_{i,j}^{T_{i}}$, the
conjugacy class of $t_{i,j}$ in $T_{i}$. Set $X_{i,j}:=t_{i,j}^{-1}O_{i,j}$, 
$1\leq j\leq 3$. Since the $O_{i,j}$'s are normal sets,%
\begin{equation*}
T_{i}=t_{i,1}^{-1}t_{i,2}^{-1}t_{i,3}^{-1}t_{i,1}^{T_{i}}t_{i,2}^{T_{i}}t_{i,3}^{T_{i}}\subseteq t_{i,1}^{-1}t_{i,2}^{-1}t_{i,3}^{-1}O_{i,1}O_{i,2}O_{i,3}=X_{i,1}X_{i,2}X_{i,3}%
\text{.}
\end{equation*}%
Moreover $X_{i,j}=\{t_{i,j}^{-1}p^{-1}t_{i,j}p\ :\ p\in Q_{i}\}\subseteq
P^{t_{i,j}}P$ for $j=1,2$, and $%
X_{i,3}=t_{i,3}^{-1}O_{i,3}=O_{i,3}t_{i,3}^{-1}=\{p^{-1}t_{i,3}pt_{i,3}^{-1}%
\ :\ p\in Q_{i}\}\subseteq PP^{t_{i,3}^{-1}}$. It follows that 
\begin{equation*}
T_{i}=X_{i,1}X_{i,2}X_{i,3}\subseteq
P^{t_{i,1}}PP^{t_{i,2}}PP^{t_{i,3}^{-1}}.
\end{equation*}%
Thus $B=T_{1}\cdots T_{k}$ is contained in a product of at most $5k$
conjugates of $P$, and since $n=|T|^{k}$ we have $k=\log n/\log \left\vert
T\right\vert $. Hence 
\begin{equation*}
\gamma _{\mathtt{cp}}^{P}(G)\leq 5k=5\log n/\log |T|\leq \frac{5}{\log 60}%
\log n<\log n\text{.}
\end{equation*}%
We have proved that if $G$ is a primitive group of twisted wreath product
type then $\gamma _{\mathtt{cp}}^{H}(G)\leq c_{III(c)}\log n$ where $%
0<c_{III(c)}<1$.

This completes the proof of Theorem \ref{Th_GeneralPrimitive}.

{}\smallskip \textbf{Acknowledgement}: We would like to thank N. Nikolov for
a useful discussion.


\begin{thebibliography}{99}
\bibitem{AschGur1989} M. Aschbacher, R.M. Guralnick, On abelian quotients of
primitive groups, \emph{Proc. of the AMS} \textbf{107} (1989) no. I, 89-95.

\bibitem{BrayHoltRDougal} J.N. Bray, D.F. Holt, C.M. Roney-Dougal, The
Maximal Subgroups of the Low-Dimensional Finite Classical Groups, LMS 407,
Cambridge University Press, (2013)

\bibitem{GaronziLevy} M. Garonzi, D. Levy, Factorizing a finite group into
conjugates of a subgroup, \emph{J. Algebra} \textbf{418} (2014), 129-141.

\bibitem{GPSSBoundedProof2013} N. Gill, L. Pyber, I. Short, E. Szab\'{o}, On
the product decomposition conjecture for finite simple groups, Groups Geom.
Dyn. \textbf{7} (2013), 867-882.

\bibitem{GM2012products} R.M.~Guralnick, G.~Malle, \newblock Products of
conjugacy classes and fixed point spaces, \newblock {\em J. Amer. Math. Soc.}%
, \textbf{25}(1):77--121, 2012.

\bibitem{KL} P. Kleidman, M.W. Liebeck, The subgroup structure of the finite
classical groups, LMS Lecture Note Series, 129. Cambridge University Press,
Cambridge, 1990.

\bibitem{LiebeckMinDeg1984} M.W. Liebeck, On minimal degrees and base sizes
of primitive permutation groups, \emph{Arch. Math. (Basel)} \textbf{43}
(1984), no. 1, 11--15.

\bibitem{LNS} M.W. Liebeck, N. Nikolov, A. Shalev, A conjecture on product
decompositions in simple groups, \emph{Groups Geom. Dyn.} \textbf{4} (2010),
no. 4, 799--812.

\bibitem{LNS2} M.W. Liebeck, N. Nikolov, A. Shalev, Product decompositions
in finite simple groups, \emph{Bull. Lond. Math. Soc.} \textbf{44} (2012),
no. 3, 469--472.

\bibitem{LPS1988ON} M.W. Liebeck, C.E. Praeger, and J.~Saxl, \newblock On
the {O}'{N}an-{S}cott theorem for finite primitive permutation groups, %
\newblock {\em J. Austral. Math. Soc. Ser. A}, \textbf{44}(3):389--396, 1988.

\bibitem{Rose1978groups} J.~S. Rose. \newblock {\em A course on group theory}%
. \newblock Cambridge University Press, 1978.
\end{thebibliography}
\end{document}